\newcommand{\cp}{\,\square\,}
\newtheorem{theorem}{Theorem}
\begin{document}

\title{Distribution of Global Defensive $k$-Alliances\\ over some Graph Products}

\author{Mostafa Tavakoli $^{a}$  \and Sandi Klav\v zar $^{b,c,d}$} 

\date{}

\maketitle

\begin{center}
$^a$ Department of Applied Mathematics, Faculty of Mathematical Sciences,\\
Ferdowsi University of Mashhad, P.O.\ Box 1159, Mashhad 91775, Iran\\
{\tt m$\_$tavakoli@um.ac.ir}

\medskip

$^b$ Faculty of Mathematics and Physics, University of Ljubljana, Slovenia\\
{\tt sandi.klavzar@fmf.uni-lj.si}

\medskip

$^c$ Faculty of Natural Sciences and Mathematics, University of Maribor, Slovenia\\
\medskip

$^d$ Institute of Mathematics, Physics and Mechanics, Ljubljana, Slovenia\\
\end{center}

\begin{abstract}
If $G=(V_G, E_G)$ is a graph, then $S\subseteq V_G$ is a global defensive $k$-alliance in $G$ 
if (i) each vertex not in $S$ has a neighbor in $S$ and (ii) each vertex of $S$ has at least $k$ more neighbors inside $S$ than outside of it. The global defensive $k$-alliance number of $G$ is the minimum cardinality among all global defensive $k$-alliance in $G$. In this paper this concept is studied on the generalized hierarchical, the lexicographic, the corona, and the edge corona product. For all of these products upper bounds expressed with related invariants of the factors are given. Sharpness of the bounds are also discussed. 
\end{abstract}

\noindent {\bf Key words:} global alliance; global defensive $k$-alliance, hierarchical product of graphs; lexicographic product of graphs

\medskip\noindent
{\bf AMS Subj.\ Class:} 05C69, 05C76

\section{Introduction}
\label{sec:intro}

Alliances form a phenomena in many ways, say in politics, in relations between people (alliances such as common friendship), in social networks (say, Twitter users following each other), in natural sciences (say, animals from the same group), to list just a very small sample of examples. Since graphs are standard mathematical models for several of such phenomena, there is a strong need for a theory of alliances in graphs. The foundation for this theory was set up in~\cite{kri}, where alliances were classified into defensive, offensive, and powerful. 

In this paper we are interested in defensive alliances, the theory of which has been recently surveyed in~\cite{yero-2017}. More precisely, we are interested in global defensive $k$-alliances, a concept that widely generalizes global defensive alliances and goes back to the paper~\cite{appl3} published two years before~\cite{kri}. We refer to~\cite{Rodrıguez, yero-2011} for several mathematical properties of global defensive $k$-alliances and to~\cite{appl1,kri,appl3} for applications of alliances in as different areas as national defence, studies of RNA structures, and fault-tolerant computing.

Since determining an optimal (global) defensive ($k$-)alliance is NP-hard, a way to approach the problem is via dynamic programming: decompose a given graph into smaller parts, solve the problem on these smaller graphs, and deduce a solution or an approximate the original problem from the obtained partial solutions. Graph products and similar operations are natural candidates for such an approach. In~\cite{Eballe}, the global defensive alliance was investigated on the join, the corona, and the composition (alias lexicographic product) of graphs. Very recently, the global defensive $k$-alliance was studied on the Cartesian product, the strong product, and the direct product of graphs~\cite{y1,y2}. In this paper we continue this direction of research by investigating the global defensive $k$-alliance on additional graph products and graph operations, in particular extending some previous results. 

In the rest of the introduction concepts and notation needed are introduced, in particular the global defensive $k$-alliance is formally defined. In the subsequent section we study the generalized hierarchical product, in Section~\ref{sec:lexico} we proceed with the lexicographic product, while in Section~\ref{sec:corona-and-edge-corona} we consider the corona and the edge corona product. 

Throughout this article, $G=(V_G, E_G)$ stands for a simple graph of order $n(G) = |V_G|$ and size $m(G) = |E_G|$. The degree of $v\in V_G$ is denoted by ${\rm deg}_G(v)$, and the minimum and the maximum degree of $G$ by $\delta_G$ and $\Delta_G$, respectively. If $X\subseteq V_G$, then the subgraph induced by $X$ is denoted by $G\langle V'\rangle$. If $S\subseteq V_G$ and $v\in V_G$, then $N_S(v)$ is the set of neighbors of $v$ in $S$, that is, $N_S(v):=\{u\in S\; | \; uv\in E_G \}$. 
The {\em complement} of $S$ in $V_G$ is denoted by $\bar{S}$. 

Let $X\subseteq V_G$. Then $D\subseteq V_G$ {\em dominates} $X$, if every vertex from $X\setminus D$ has a neighbor in $D$. When $X = V_G$ we say that $D$ is a {\em dominating set} of $G$. The {\em domination number} $\gamma(G)$ is the cardinality of a smallest dominating set of $G$. Let $G=(V_G, E_G)$ be a graph and $k\in \{-\delta_G, \ldots, \delta_G\}$. Then a nonempty set $S\subseteq V_G$ is a {\em global defensive $k$-alliance} in $G$  
if the following conditions hold:
\begin{enumerate}
\item $S$ is a dominating set of $G$ and 
\item for every $v\in S$, $|N_S(v)|\geq |N_{\bar{S}}(v)|+k$. 
\end{enumerate}
When $S$ fulfils Condition 2 it is a {\em defensive $k$-alliance}. To shorten the presentation, we will abbreviate global defensive $k$-alliance as GD$k$-A and defensive $k$-alliance as D$k$-A. Note that GD$k$-A is a D$k$-A which is also a dominating set. 

{\em The global defensive $k$-alliance number} $\gamma^d_k(G)$ of $G$ (abbreviated as {\em GD$k$-A number}) is the smallest order of a GD$k$-A in $G$. If $G$ admits not a single GD$k$-A, we set $\gamma^d_k(G)=\infty$. Similarly, {\em The defensive $k$-alliance number} $\gamma_k(G)$ of $G$ (abbreviated as {\em D$k$-A number}) is the minimum cardinality among all D$k$-As in $G$. If $G$ does not contain a D$k$-A set then we set $\gamma_k(G) = \infty$. Finally, we use the notation $[n] = \{1,\ldots, n\}$. 

\section{Generalized hierarchical products}
\label{sec:hierarchical}

A graph $G$ together with a fixed vertex subset $U\subseteq V_G$ will be denoted by $G(U)$. If $G$ and $H$ are graphs, and $U\subseteq V_G$, then the {\em generalized hierarchical product} $G(U) \sqcap H$ is the graph with the vertex set $V_G \times V_H$, vertices $(g, h)$ and $(g', h')$ being adjacent if and only if either $g = g'\in U$ and $hh' \in E_H$, or  $gg'\in E_G$ and $h = h'$. Note that when $U = V_G$, then the generalized hierarchical product is just the classical Cartesian product of graphs~\cite{hammack-2011}, that is, $G(V_G) \sqcap H = G\cp H$. The generalized hierarchical product was introduced for the first time in~\cite{barriere-2009}, we also refer to \cite{anderson-2017, anderson-2018, arezoomand-2010, tavakoli-2014} for additional results on it as well as on its applications. 

Consider $G(U) \sqcap H$ and let $h\in V(H)$. Then the set of vertices $\{(g,h):\ g\in V_G\}$ is called a {\em $G$-layer over $h$}. Similarly an {\em $H$-layer over $g\in V_G$} is defined. Note that a $G$-layer over $h$ induces a subgraph of $G(U) \sqcap H$ isomorphic to $G$ and an $H$-layer over $g\in U$ induces a subgraph of $G(U) \sqcap H$ isomorphic to $H$. 

\begin{theorem}
\label{thm:hierarchical}
If $G$ and $H$ are graphs and  $U\subseteq V_G$, then 
$$\gamma^d_k(G(U)\sqcap H)\leq \gamma^d_{k}(G)n(H)\,.$$ 
\end{theorem}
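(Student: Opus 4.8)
\emph{Proof plan.} The plan is to lift a smallest global defensive $k$-alliance of $G$ to the product by taking its ``cylinder'' over $H$. If $\gamma^d_k(G)=\infty$ the inequality is vacuous, so I would assume $S$ is a GD$k$-A in $G$ with $|S|=\gamma^d_k(G)$ and put $T:=S\times V_H\subseteq V_G\times V_H$. Then $T$ is nonempty with $|T|=\gamma^d_k(G)\,n(H)$, so it suffices to show that $T$ is a GD$k$-A in $G(U)\sqcap H$; the bound $\gamma^d_k(G(U)\sqcap H)\le\gamma^d_k(G)\,n(H)$ follows immediately.

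First I would record how the two kinds of edges of $G(U)\sqcap H$ meet $T$. Fix $(g,h)$. Its neighbors of the form $(g',h)$ are exactly the $(g',h)$ with $g'\in N_G(g)$, whereas its neighbors of the form $(g,h')$ occur only when $g\in U$, and then correspond to $h'\in N_H(h)$; crucially all such vertices lie in the same $H$-layer. Hence for $(g,h)$ with $g\in S$ one has $N_{\overline T}(g,h)=N_{\overline S}(g)\times\{h\}$, because every $H$-layer neighbor of $(g,h)$ stays inside $T=S\times V_H$, while $N_T(g,h)\supseteq N_S(g)\times\{h\}$. Consequently $|N_T(g,h)|=|N_S(g)|+\varepsilon$ and $|N_{\overline T}(g,h)|=|N_{\overline S}(g)|$, where $\varepsilon=\deg_H(h)\ge 0$ if $g\in U$ and $\varepsilon=0$ otherwise.

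The two defining conditions are then straightforward. Since $S$ is in particular a D$k$-A in $G$, $|N_S(g)|\ge|N_{\overline S}(g)|+k$ for every $g\in S$; adding the nonnegative number $\varepsilon$ gives $|N_T(g,h)|\ge|N_{\overline T}(g,h)|+k$ for every $(g,h)\in T$, so $T$ is a D$k$-A in $G(U)\sqcap H$. For domination, let $(g,h)\notin T$, i.e. $g\notin S$; as $S$ dominates $G$ there is $g'\in S$ with $gg'\in E_G$, and then $(g',h)\in T$ is adjacent to $(g,h)$ in $G(U)\sqcap H$. Thus $T$ is a GD$k$-A of the required order. (One may also note that $k$ stays admissible for the product, since $\delta_{G(U)\sqcap H}\ge\delta_G$.) I do not expect a real obstacle; the only delicate point is the case distinction $g\in U$ versus $g\notin U$ when counting $N_T(g,h)$, together with the observation that the extra $H$-layer edges at a vertex of $T$ only ever enlarge $|N_T|$ and never contribute to $|N_{\overline T}|$.
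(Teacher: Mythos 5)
Your proposal is correct and follows essentially the same route as the paper: lift a minimum GD$k$-A $S_G$ of $G$ to $S_G\times V_H$, verify domination layer by layer, and check the defensive condition with the case distinction $g\in U$ (extra $\deg_H(h)\ge 0$ inside the alliance) versus $g\notin U$. No gaps; your explicit bookkeeping that the $H$-layer neighbors never fall outside $T$ is exactly the point the paper's computation relies on.
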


\begin{proof}
Let $U\subseteq V_G$, let $S_G$ be a GD$k$-A in $G$ with $|S_G| = \gamma^d_{k}(G)$, and set $S=S_G\times V_H$. 

Note first that $S$ is a domination set of $G(U)\sqcap H$. Indeed, since $S_G$ is a dominating set of $G$, every $G$-layer over $h\in V_H$ is dominated by the intersection of $S$ with the layer. So all $G$-layers are dominated by $S$ and therefore $G(U)\sqcap H$ is dominated by $S$. 

To show that $S$ is a D$k$-A in $G(U)\sqcap H$ consider an arbitrary vertex $(g,h)$ from $S$. 
If $g\in U$, then we have
\begin{align*}
|N_{S}((g,h))|-|N_{\bar{S}}((g,h))|&=(|N_{S_G}(g)| - |N_{\bar{S}_G}(g)|) + |N_{V_H}(h)|\\
& \ge k + \deg_H(h) \\
& \geq k\,,
\end{align*}
and if $g\notin U$, then
$$|N_{S}((g,h))|-|N_{\bar{S}}((g,h))| = (|N_{S_G}(g)| - |N_{\bar{S}_G}(g)|) + 0 \geq k\,.$$
We have thus seen that $S$ is a GD$k$-A. Since $|S| = \gamma^d_{k}(G) n(H)$, the argument is complete. 
\end{proof}

Let $\Gamma$ be the graph of the truncated cube, see the right-hand side of Fig.~\ref{fig:truncated-cube}. Then $\Gamma$ can be represented as the hierarchical product $G(U)\sqcap P_2$, where $G$ is the graph of order $12$ on the left-hand side of  Fig.~\ref{fig:truncated-cube} and $U=\{g_1, g_4, g_9, g_{12}\}$.

\begin{figure}[ht!]
 \centerline{\includegraphics[scale=.4]{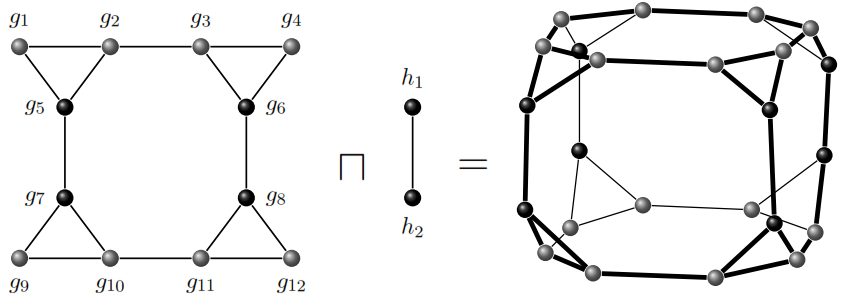}}
\caption{The graph $\Gamma = G(U)\sqcap P_2$, where $U=\{g_1, g_4, g_9, g_{12}\}$.}
\label{fig:truncated-cube}
\end{figure}

The set $S_G=\{g_5, g_6, g_7, g_8\}$ (drawn in black in Fig.~\ref{fig:truncated-cube}) is a global defensive $(-1)$-alliance in $G$. Since $\gamma(G) = 4$ (which follows for instance from the fact that the vertices $g_1, g_4, g_9, g_{12}$ are pairwise at distance at least $3$), we have $\gamma^d_{-1}(G)=4$.  Hence Theorem~\ref{thm:hierarchical} yields $\gamma^d_{-1}(G(U)\sqcap P_2)\leq \gamma^d_{-1}(G) n(P_2) = 8$.  

Now, let $S_{\Gamma}$ be a minimal global defensive $(-1)$-alliance in $\Gamma$. Since $\Gamma$ is a $3$-regular graph and for every $v$ of $V_{\Gamma}$ we have $|N_{S_{\Gamma }}(g)|-|N_{\bar{S}_{\Gamma}}(g)|\geq -1$, each vertex $u$ of $S_{\Gamma}$ must have at least one neighbor in $S_{\Gamma}$. On the other hand, $S_{\Gamma}$ dominates vertices of $\Gamma$ and so $\gamma^d_{-1}(\Gamma)\geq 8$; hence the the inequality of Theorem~\ref{thm:hierarchical} is sharp.

As already mentioned, $G(V_G) \sqcap H = G\cp H$. Hence Theorem~\ref{thm:hierarchical} for the case of the Cartesian product reads as:  
\begin{equation*}
\gamma^d_k(G\cp H)\leq \gamma^d_{k}(G)n(H)\,. 
\end{equation*}
By the well known commutativity property of the Cartesian product operation, this bound further implies that  
$$\gamma^d_{k}(G\cp H)\leq \min \{\gamma^d_{k}(H)n(G), \gamma^d_{k}(G)n(H)\}.$$
The special case of the latter result for $k=1$ has been recently obtained in~\cite{y1}. 

\section{Lexicographic products}
\label{sec:lexico}

The {\em lexicographic product} $G[H]$ of graphs $G$ and $H$ has $V(G[H]) = V_G\times V_H$, vertices $(g_1,h_1)$ and $(g_2,h_2)$ being adjacent if either $g_1g_2\in E_G$, or $g_1=g_2$ and $h_1h_2\in E_H$.  

Let $S$ be a GD$k$-A in $G$ with $k\ge 0$ and set $S = S_G\times V_H$. Since $|N_{S_G}(g)|-|N_{\bar{S}_G}(g)| \ge k$, we get that 
$$|N_{S_G}(g)|n(H) - |N_{\bar{S}_G}(g)|n(H) \ge kn(H)\,,$$
which in turn implies that for any vertex $h\in V_H$, 
$$|N_{S_G}(g)|n(H) + \deg_H(h) - |N_{\bar{S}_G}(g)|n(H) \ge kn(H) + \delta_H\,.$$
Since $|N_{S_G}(g)|n(H) + \deg_H(h) = |N_S(g,h)|$ and $|N_{\bar{S}_G}(g)|n(H) = |N_{\bar{S}}(g,h)|$ this means that $S$ is a global defensive $(kn(H) + \delta_H)$-alliance. As $S$ is also a dominating set, we conclude that 
$$\gamma^d_{kn(H) + \delta_H}(G[H])\le n(H)\gamma^d_k(G)\,.$$
For $k>0$ this is a better result than 
$$\gamma^d_k(G[H])\leq n(H)\gamma^d_k(G)$$
because in such a case $k < kn(H) + \delta_H$, and so $\gamma^d_k(G[H]) \le \gamma^d_{kn(H) + \delta_H}(G[H])$. 

Eballe et al.~\cite{Eballe} obtained some upper bounds of $\gamma^d_k(G[H])$  for the case $k=0$ and $H = K_m$.  In the next theorem, we present an upper bound on  $\gamma^d_k(G[H])$ for the case when there exists a GD$k$-A in $G$ with some special structure. For this sake recall that an {\em $r$-perfect code} in $G = (V_G, E_G)$ is a subset $D$ of $V_G$ for which the balls of radius $r$ centered at the vertices of $D$ form a partition of $V_G$, cf.~\cite{cod}. 

\begin{theorem}
\label{th1}
Let $k > 0$, let $S$ be a smallest GD$k$-A set in $G$, and suppose that $G\langle S\rangle$ has a $1$-perfect code. If $H$ is a graph with more than one vertex, then $G[H]$ has a GD$k$-A. Moreover, if $k \ge 2$, then 
$$\gamma^d_k(G[H])\leq n(H)(\gamma^d_k(G)-\gamma(G\langle S\rangle))
+\gamma(G\langle S\rangle)\,.$$
\end{theorem}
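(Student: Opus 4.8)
The plan is to build a GD$k$-A of $G[H]$ by starting from the product set $S_G \times V_H$ (which we already know, from the discussion preceding the theorem, gives a global defensive $(kn(H)+\delta_H)$-alliance, hence in particular a GD$k$-A since $k\le kn(H)+\delta_H$ when $k>0$) and then \emph{trimming} it. Concretely, let $D$ be a $1$-perfect code in $G\langle S\rangle$, so $|D| = \gamma(G\langle S\rangle)$ and the closed neighborhoods $N_{G\langle S\rangle}[d]$, $d\in D$, partition $S$. For each $d\in D$ I would keep the full $H$-layer over $d$ but delete all but one vertex of $H$ from each $H$-layer over a vertex $s\in S\setminus D$; that is, fix a vertex $h_0\in V_H$ and set
$$S^* = \big(D\times V_H\big)\ \cup\ \big((S\setminus D)\times\{h_0\}\big)\,.$$
Then $|S^*| = n(H)|D| + (|S|-|D|) = n(H)(\gamma^d_k(G)-\gamma(G\langle S\rangle)) + \gamma(G\langle S\rangle)$, which is exactly the claimed bound, so it remains to verify that $S^*$ is a GD$k$-A of $G[H]$.

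The domination check is easy: every vertex outside $S_G\times V_H$ already had a neighbor in some layer over a vertex of $S_G$, and since $S_G = S$ and every vertex of $S\setminus D$ is adjacent in $G$ to some vertex of $D$ (as $D$ is a dominating set of $G\langle S\rangle$), the deleted layers are themselves dominated by the retained full layers over $D$; within the $G$-structure all of $S_G$ is adjacency-wise still "present" through $S^*$, so domination of $G[H]$ persists. The substantive part is Condition 2. For a vertex $(d,h)$ with $d\in D$: it gains at least $n(H)$ alliance-surplus from every $G$-neighbor of $d$ that lies in $S$ (since the whole layer over such a neighbor... wait, a neighbor in $S\setminus D$ contributes only the single vertex $h_0$, not the whole layer) — here is where the computation must be done carefully. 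I would estimate $|N_{S^*}(d,h)| - |N_{\overline{S^*}}(d,h)|$ by separating neighbors of $d$ in $G$ into those in $D$ (contributing $+n(H)$ each) and those in $S\setminus D$ (contributing $+1$ each, but also a deficit of $n(H)-1$ each to the $\overline{S^*}$ side), plus neighbors of $d$ in $\overline{S}$ (contributing $-n(H)$ each), plus the $\deg_H(h)$ term from the intra-layer edges. For a vertex $(s,h_0)$ with $s\in S\setminus D$: the unique $d\in D$ with $s\in N_{G\langle S\rangle}[d]$ supplies a full layer of $n(H)$ neighbors in $S^*$; other $G$-neighbors of $s$ in $S\setminus D$ supply one vertex each; $G$-neighbors in $\overline{S}$ supply $n(H)$ vertices to $\overline{S^*}$; and the intra-layer term is $\deg_H(h_0)$, but now $(s,h_0)$ also sees the other $n(H)-1$ vertices of its own former layer as being in $\overline{S^*}$.

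The crux — and the step I expect to be the main obstacle — is showing that the single full layer over the code vertex $d$ dominating $s$ is enough to absorb both (i) the loss incurred by shrinking all the \emph{other} $S\setminus D$-layers adjacent to $s$ and (ii) the self-loss of the $n(H)-1$ discarded vertices in $s$'s own layer; this is precisely where the hypothesis $k\ge 2$ must enter, presumably because one needs a net gain of roughly $n(H) - (\text{number of bad directions})\cdot(n(H)-1) - (n(H)-1) \ge k$, which forces control on how many $S\setminus D$-neighbors a vertex can have, and this is exactly what the $1$-perfect code structure of $G\langle S\rangle$ buys us: each $s\in S\setminus D$ has a \emph{unique} $D$-neighbor, so its remaining $S$-neighbors are few relative to the defensive surplus $|N_S(s)| - |N_{\overline S}(s)| \ge k$ it inherited from $S$ being a D$k$-A in $G$. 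I would turn that inherited inequality into the needed bound on $S^*$ by bookkeeping the layer-weights $n(H)$ versus $1$ carefully; the case $k\ge 2$ (versus merely $k\ge 1$) is likely needed to cover the worst case $n(H)=2$, where a discarded layer still costs one unit. Once this inequality is established for both vertex types, $S^*$ satisfies Condition 2, it is dominating, and the cardinality bound follows, completing the proof.
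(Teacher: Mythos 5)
Your construction is inverted, and this is a genuine error, not just a presentational one. You keep the full $H$-layers over the code $D$ and shrink the layers over $S\setminus D$ to a single vertex $h_0$. First, the cardinality you claim is wrong: your set has size $n(H)|D|+(|S|-|D|)=n(H)\gamma(G\langle S\rangle)+\gamma^d_k(G)-\gamma(G\langle S\rangle)$, which is not the quantity $n(H)(\gamma^d_k(G)-\gamma(G\langle S\rangle))+\gamma(G\langle S\rangle)=n(H)|S\setminus D|+|D|$ in the statement; the identity you assert between them is false in general. Second, and more seriously, the defensive condition genuinely fails for your $S^*$, and the difficulty you flag as ``the crux'' cannot be repaired: the $1$-perfect code guarantees that each $s\in S\setminus D$ has exactly \emph{one} neighbor in $D$, but it gives no control whatsoever on how many neighbors $s$ has in $S\setminus D$ (the inherited inequality $|N_S(s)|-|N_{\bar S}(s)|\ge k$ does not bound $|N_{S\setminus D}(s)|$, since $|N_{\bar S}(s)|$ may be $0$). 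Each such neighbor's shrunk layer contributes $1$ friend and $n(H)-1$ enemies to $(s,h_0)$, and the single full layer over the unique code-neighbor cannot absorb this. A concrete counterexample: take $G=C_6$ with vertices $v_1,\dots,v_6$, $k=2$, $S=V_G$ (the smallest GD$2$-A), $D=\{v_1,v_4\}$ (a $1$-perfect code of $G\langle S\rangle=C_6$), and $H=K_2$. Then for $(v_2,h_0)\in S^*$ one gets $|N_{S^*}|=3$ and $|N_{\overline{S^*}}|=2$, so the surplus is $1<2=k$; your $S^*$ is not a GD$2$-A, and its size $8$ also differs from the bound $10$ claimed by the theorem.

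The construction that does work is the mirror image of yours: keep the full $H$-layers over $S\setminus D$ and retain only a single vertex $v$ of \emph{minimum} degree of $H$ in each layer over $D$, i.e.\ $S^{\ast}=\bigl((S\setminus D)\times V_H\bigr)\cup\bigl(D\times\{v\}\bigr)$. The point is that the perfect-code structure is then used in the right direction: a vertex $(g,h)$ with $g\in S\setminus D$ sees exactly one shrunk layer (over its unique $D$-neighbor), so its surplus is $n(H)\bigl(|N_S(g)|-|N_{\bar S}(g)|\bigr)-2n(H)+\deg_H(h)+2\ge (k-2)n(H)+2\ge k$ for $k\ge 2$; a vertex $(g,v)$ with $g\in D$ has no $D$-neighbors at all (code vertices are pairwise nonadjacent), so its only loss is the $\delta_H$ in-layer vertices, giving $kn(H)-\delta_H\ge k$. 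This set has size exactly $n(H)|S\setminus D|+|D|$, which is the stated bound. Your handling of the existence claim (via $S\times V_H$ being a global defensive $(kn(H)+\delta_H)$-alliance, hence a GD$k$-A for $k>0$) is fine; it is the ``moreover'' bound that your proposal does not establish.
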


\begin{proof}
First, suppose that $k>2$. Let $S_G$ be a GD$k$-A in $G$ and $D$ be a $1$-perfect code in $G\langle S_G\rangle$. Then $|D| = \gamma(G\langle S_G\rangle)$. (This fact is well known and has been independently established several times, see~\cite[Theorem 9]{haynes-1998}.) 

Set
$S=\big((S_G\setminus D)\times V_H\big)\cup\big(D\times \{v\}\big)$ where $v$ is a vertex of minimum degree in $H$. It is easy to check that $S$ is a dominating set
in $G[H]$. We claim that $S$ is a D$k$-A in $G[H]$. To prove this claim, let $(g,h)\in S$. If $(g,h)\in \big((S_G\setminus D)\times V_H\big)$, then
\begin{align*}
|N_{S}((g,h))|-|N_{\bar{S}}((g,h))|&=
(|N_{S}(g)|-1)n(H)-(|N_{\bar{S}}(g)|+1)n(H)+{\rm deg}_H(h)+2\\
 &=n(H)(|N_{S}(g)|-|N_{\bar{S}}(g)|)-2n(H)+{\rm deg}_H(h)+2\\
 &\geq
(k-2)n(H)+{\rm deg}_H(h)+2\geq k,
\end{align*}
which holds true because $k\ge 2$. 

Consider next a vertex $(g,h)\in D\times \{v\}$. Then 
\begin{align*}
|N_{S}((g,h))|-|N_{\bar{S}}((g,h))|&=
|N_{S}(g)|n(H)-|N_{\bar{S}}(g)|n(H)-\delta_H\\
 &=n(H)(|N_{S}(g)|-|N_{\bar{S}}(g)|)-\delta_H\\
 &\geq
kn(H)-\delta_H \geq k\,,
\end{align*}
which also holds for every $k\ge 2$. 

We have thus proved that $S$ is a D$k$-A in $G[H]$. Moreover, since $S$ contains the copy of $S_G$ in the $G$-layer over $v$, by the definition of the lexicographic product we also infer that $S$ is a dominating set. We conclude that $S$ is a GD$k$-A in $G[H]$. Since clearly $|S| = n(H)(\gamma^d_k(G)-\gamma(G\langle S\rangle)) +\gamma(G\langle S\rangle)$, the proof is complete. 
\end{proof}

The proof of the bound from Theorem~\ref{th1} does not work for $k=1$. The reason is that the inequality $(k-2)n(H)+{\rm deg}_H(h)+2\geq k$ for the case $k=1$ reduces to $-n(H)+{\rm deg}_H(h)+2\geq 1$ which clearly does not hold in general.

\section{Corona and edge corona products}
\label{sec:corona-and-edge-corona}

The {\em corona product} $G\circ H$ of graphs $G$ and $H$ is the graph obtained from the disjoint union of $G$ and $n(G)$ copies of $H$ bijectively assigned to the vertices of $G$, where each vertex $v\in V_G$ is adjacent to all the vertices of the assigned copy of $H$. This product was introduced in~\cite{har}, see also~\cite{tavakoli3, coro}.

\begin{theorem} \label{th4}
If $G$ and $H$ are graphs, then 
$$\gamma^d_k(G\circ H)\leq \min\{n(G)(1 + \gamma^d_{k-1}(H)), n(G)\gamma^d_{k+1}(H)\}\,.$$
Moreover, if $\delta_G-n(H)\geq k$, then $\gamma^d_k(G\circ H)=n(G)$.  
\end{theorem}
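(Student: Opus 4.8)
The plan is to prove the two upper bounds by exhibiting explicit global defensive $k$-alliances in $G\circ H$, and then to handle the equality case separately. For the first bound, $\gamma^d_k(G\circ H)\le n(G)(1+\gamma^d_{k-1}(H))$, I would take $S_H$ to be a smallest GD$(k{-}1)$-A in $H$ and set $S=V_G\cup\bigl(\bigcup_{v\in V_G} S_H^{(v)}\bigr)$, where $S_H^{(v)}$ denotes the copy of $S_H$ inside the copy of $H$ attached to $v$. Domination is immediate: every vertex of $G$ lies in $S$, and in each copy of $H$ the set $S_H^{(v)}$ dominates that copy. For the $k$-alliance condition I would check two kinds of vertices. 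A vertex $v\in V_G$ has all $n(H)$ vertices of its attached copy inside $S$, so its surplus of $S$-neighbors over $\bar S$-neighbors is at least $\deg_G(v)-\deg_G(v)+n(H)\ge k$ provided $n(H)\ge k$ — and I should note that if $n(H)<k$ then already $\gamma^d_{k-1}(H)=\infty$ unless... actually the cleanest route is: a vertex $v\in V_G$ in $S$ has $N_S$-count at least $\deg_G(v)+n(H)$ and $N_{\bar S}$-count at most $\deg_G(v)$, giving surplus $\ge n(H)$; I will simply carry the hypothesis implicitly through $\gamma^d_{k-1}(H)<\infty$ and the fact that a vertex of $S_H$ in $H$ satisfies $|N_{S_H}|-|N_{\bar S_H}|\ge k-1$, which inside $G\circ H$ gains one extra $S$-neighbor (namely $v$), pushing the surplus to $\ge k$. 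Thus $S$ is a GD$k$-A of size $n(G)(1+|S_H|)=n(G)(1+\gamma^d_{k-1}(H))$.

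For the second bound, $\gamma^d_k(G\circ H)\le n(G)\gamma^d_{k+1}(H)$, I would take $S_H$ to be a smallest GD$(k{+}1)$-A in $H$ and set $S=\bigcup_{v\in V_G} S_H^{(v)}$, \emph{without} including the vertices of $G$. Now each $v\in V_G$ is dominated because its attached copy of $H$ is nonempty and $S_H^{(v)}$ is nonempty and adjacent to $v$ — wait, domination of $v$ requires a neighbor of $v$ in $S$; since $v$ is adjacent to every vertex of its copy of $H$ and $S_H^{(v)}\ne\emptyset$, this holds. Vertices of $H$-copies are dominated since $S_H$ dominates $H$. For the alliance condition: a vertex $h$ in a copy of $H$ that lies in $S_H^{(v)}$ has, inside $G\circ H$, exactly the same $S$-neighbors as in $H$ but one extra $\bar S$-neighbor, namely $v$ (which is not in $S$); so its surplus drops by $1$, from $\ge k+1$ to $\ge k$. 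No vertex of $V_G$ is in $S$, so nothing more to check. Hence $S$ is a GD$k$-A of the claimed size, and the minimum of the two bounds follows.

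For the equality statement, assume $\delta_G-n(H)\ge k$. The upper bound $\gamma^d_k(G\circ H)\le n(G)$ follows by taking $S=V_G$: this dominates $G\circ H$ (every $H$-copy vertex is adjacent to its base vertex), and for $v\in V_G$ the surplus is $\deg_G(v)-(\deg_G(v)$ complement part$)$ — more precisely $|N_S(v)|=\deg_G(v)$ and $|N_{\bar S}(v)|=n(H)$ (the attached copy), so the surplus is $\deg_G(v)-n(H)\ge\delta_G-n(H)\ge k$. For the matching lower bound I would argue that any GD$k$-A must contain every vertex of $V_G$: if some $v\in V_G$ were outside $S$, then consider a vertex $h$ of the copy of $H$ attached to $v$. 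Since $v\notin S$, for $h$ to be dominated $h$ needs another neighbor in $S$, i.e. some vertex of that $H$-copy lies in $S$; take such an $h\in S$. Then $h$'s neighbors in $G\circ H$ are exactly its $H$-neighbors plus $v$; since $v\notin S$, $|N_{\bar S}(h)|\ge 1+|N_{\bar S_H}(h)|$ while $|N_S(h)|\le |N_{S_H}(h)|\le\delta_H\le n(H)-1$, and pushing this through together with $k\ge\delta_G-n(H)\ge ?$ should force a contradiction; alternatively, and more robustly, count: if $v\notin S$ then all $n(H)$ vertices of its $H$-copy that lie outside $S$ must be dominated from within the copy, but any vertex of the copy inside $S$ has at most $n(H)-1$ neighbors total in $G\circ H$ that are also in that copy, and the surplus constraint $k$ combined with the presence of the non-$S$ vertex $v$ as a neighbor forces $|N_S(h)|\ge k+|N_{\bar S}(h)|\ge k+1$, which is compatible only when the copy is nearly entirely in $S$ — and then one shows $|S|>n(G)$ unless $V_G\subseteq S$. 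The cleanest form: show directly that $V_G\subseteq S$ is forced, whence $|S|\ge n(G)$, matching the upper bound.

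The main obstacle will be the lower bound in the equality case. Showing the upper bounds is routine alliance-counting; the subtlety is proving that no smaller global defensive $k$-alliance can exist when $\delta_G-n(H)\ge k$, i.e. that every such alliance is forced to swallow all of $V_G$. The delicate point is ruling out "clever" alliances that omit a base vertex $v$ but compensate by including many vertices of $v$'s attached $H$-copy; one must show that whatever is gained there is more expensive than just including $v$, using crucially that $\delta_G$ is large relative to $n(H)$. I expect the argument to hinge on comparing, for each omitted base vertex, the cost of the extra $H$-copy vertices forced into $S$ against the single vertex saved, and concluding the trade is never favorable.
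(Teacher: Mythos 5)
You follow the paper's constructions exactly, and two of the three pieces are fine: your verification of the second bound (taking only the copies of a smallest GD$(k{+}1)$-A in each copy of $H$, so that each copy vertex loses exactly one unit of surplus to its base vertex) and your upper bound $S=V_G$ in the equality case both match the paper and are correct. The genuine gap is in the first bound, at the base vertices. Your check of the defensive condition at $v\in V_G$ is wrong as written: you assert that all $n(H)$ vertices of the copy attached to $v$ lie in $S$, but in your construction only the copy of $S_H$ does. The correct counts are $|N_S(v)|=\deg_G(v)+|S_H|$ and $|N_{\bar{S}}(v)|=n(H)-|S_H|$, so the surplus at $v$ equals $\deg_G(v)+2|S_H|-n(H)$, and nothing in the hypotheses forces this to be at least $k$. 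Concretely, take $G=K_2$, let $H$ be a triangle with a pendant vertex attached to each of its three vertices (so $n(H)=6$), and let $k=2$: the triangle is a smallest GD$1$-alliance in $H$, and the surplus of a base vertex in your set is $1+2\cdot 3-6=1<2$, so your $S$ is not a GD$2$-alliance; in this example one can check that every GD$2$-alliance of $G\circ H$ has at least $10$ vertices while the claimed bound is $8$, so the missing step is not a formality that can be argued away. (For comparison, the paper's own proof also verifies the condition only inside the copies of $H$ and says nothing about the base vertices, so this is precisely the delicate point of the statement, not something you could have safely waved through.)

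The second gap is the lower bound in the equality case, which you leave as a sketch (``should force a contradiction'', ``I expect the argument to hinge on\ldots''). The route you propose, namely forcing $V_G\subseteq S$ for every GD$k$-alliance $S$, is both unproved and unnecessary. The paper's argument is one line: every GD$k$-alliance is in particular a dominating set, and $\gamma(G\circ H)=n(G)$ because for each $v\in V_G$ the vertices of the copy $H^{(v)}$ have all of their neighbors inside $\{v\}\cup V(H^{(v)})$, and these $n(G)$ sets are pairwise disjoint, so any dominating set must meet each of them. Hence $\gamma^d_k(G\circ H)\ge n(G)$ for every $k$, with no hypothesis on $\delta_G$ needed, and together with your correct computation that $S=V_G$ has surplus $\deg_G(v)-n(H)\ge\delta_G-n(H)\ge k$ at every vertex this closes the equality case. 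As written, your proposal contains a complete argument neither for the base-vertex condition in the first bound nor for the lower bound here.
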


\begin{proof}
Let $G'$ be the subgraph of $G\circ H$ isomorphic to $G$ and let $H_i$, $i\in [n(G')]$, be the the isomorphic copy of $H$ corresponding to the $i$th vertex of $G'$. 

Note that $\gamma(G\circ H) = n(G)$ and consequently $\gamma^d_k(G\circ H)\geq n(G)$.
Hence if $\delta_G-n(H)\geq k$, then $V_{G'}$ is a D$k$-A set in $G\circ H$
so that $\gamma^d_k(G\circ H)=n(G)$ holds in this case. 

For the general case, consider an arbitrary defensive $(k-1)$-alliance $S_H$ in $H$. Since each vertex of $H_i$ has exactly one neighbor outside $H_i$, the set $(\cup_{i=1}^{n(G)}S_{H_i})\cup V_{G'}$ is a GD$k$-A in $G\circ H$, where $S_{H_i}$ is
the copy of $S_H$ in $H_i$. Therefore, $\gamma^d_k(G\circ H)\leq n(G)+n(G)\gamma_{k-1}(H) = n(G)(1 + \gamma_{k-1}(H))$.

Also, if $S_H$ is a global defensive $(k+1)$-alliance in $H$, then from the same reasons as above, the set  $\cup_{i=1}^{n(G)}S_{H_i}$  is a GD$k$-A in $G\circ H$. Hence $\gamma^d_k(G\circ H)\leq n(G)\gamma^d_{k+1}(H)$ and therefore, $\gamma^d_k(G\circ H)\leq \min\{n(G)+n(G)\gamma_{k-1}(H), n(G)\gamma^d_{k+1}(H)\}$.
\end{proof}

Consider the corona products $G\circ K_m$, $m\geq 2$. Since $\gamma^d_{1}(K_m)=\left\lceil(m+2)/2\right\rceil$ and  $\gamma^d_{-1}(G)=\left\lfloor (m+1)/2 \right\rfloor$(cf.~\cite{yero-2017}), Theorem~\ref{th4}, yields
 \begin{eqnarray*}
 \gamma^d_{0}(G\circ K_m) & \leq & \min\{n(G)(1 + \gamma^d_{-1}(K_m)), n(G)\gamma^d_{1}(K_m)\}\\
  & = & \min\{n(G)(1+\left\lfloor (m+1)/2\right\rfloor, n(G)\left\lceil (m+2)/2 \right\rceil\}\\
  & = & n(G)\left\lceil (m+2)/2 \right\rceil\,.
\end{eqnarray*}
On the other hand, it was proved in~\cite[Corollary 3.7]{Eballe} that if $m\geq 2$ and $\Delta(G)<m-1$, then $\gamma^d_{0}(G\circ K_m)=n(G)\lceil (m+1)/2\rceil$. It follows that the bound of Theorem~\ref{th4} is sharp for $G\circ K_m$ for all even $m$. 

\medskip
Another corona-like product was recently introduced  as follows. The {\em edge corona} $G\diamondsuit H$ of graphs $G$ and $H$ is obtained by taking one copy of $G$ and $m(G)$ disjoint copies of $H$ associated to the edges of $G$, and for every edge $uv\in E_G$ joining $u$ and $v$ to every vertex of the copy of $H$ associated to $uv$, see~\cite{e-cor1,e-cor2}. For the statement of the next result recall that if $S_G$ is a subset of vertices of a graph $G$, then its complement is denoted with $\bar{S}_G$. 

\begin{theorem}\label{th6}
Let $G$ and $H$ be two graphs. Then
\begin{align*}
\gamma^d_k(G\diamondsuit H)\leq \min\{& m(G)\gamma^d_{k+2}(H),
\gamma^d_{k+n(H)\Delta_G}(G)+\gamma^d_{k+2}(H)|E_{G\langle \bar{S}_G\rangle}|,\\
& \big(\gamma^d_{\lceil\frac{k}{n(H)+1}\rceil}(G)+n(H)|E_{G\langle S'_G\rangle}|+\gamma^d_{k+2}(H)|E_{G\langle \bar{S'}_G\rangle}|\big)I(k)\},
\end{align*}
where $S_G$ is a global defensive $(k+n(H)\Delta_G)$-alliance in $G$, $S'_G$ is a global defensive $\lceil\frac{k}{n(H)+1}\rceil$-alliance in $G$, and 
$$I(k)=\begin{cases}
 1; & \text{if}\; k >-(n(H)+1),\\
 \infty; &  \text{otherwise}\,.
 \end{cases}$$
\end{theorem}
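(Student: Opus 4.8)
The plan is to derive each of the three quantities inside the minimum separately, by exhibiting in each case an explicit GD$k$-A of $G\diamondsuit H$ of the stated cardinality. Throughout I use two degree facts in $G\diamondsuit H$: a vertex $w$ in the copy $H_e$ attached to $e=uv\in E_G$ keeps its $H$-neighbours and gains exactly $u$ and $v$, so $\deg_{G\diamondsuit H}(w)=\deg_H(w)+2$; and a vertex $v\in V_G$ keeps its $G$-neighbours and gains all $n(H)$ vertices of $H_e$ for each of the $\deg_G(v)$ edges $e$ at $v$, so $\deg_{G\diamondsuit H}(v)=\deg_G(v)(n(H)+1)$. I also use that $k$ is admissible for $G\diamondsuit H$, hence $k\le\delta_H+2$ and $k\le\delta_G(n(H)+1)$; if $G$ has an isolated vertex or if one of the shifted invariants below equals $\infty$, the corresponding bound is vacuous and may be set aside. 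For the first bound, let $S_H$ be a smallest GD$(k+2)$-A of $H$ and put $S=\bigcup_{e\in E_G}S_{H_e}$, the union of the copies of $S_H$. Each $H_e$ is dominated since $S_H$ dominates $H$, and each $v\in V_G$ has a neighbour in the nonempty set $S_{H_e}$ for any edge $e$ at $v$, so $S$ dominates. For $w\in S_{H_e}$ with $e=uv$ the only neighbours of $w$ outside $H_e$ are $u,v\notin S$, hence $|N_S(w)|-|N_{\bar{S}}(w)|=\big(|N_{S_H}(w)|-|N_{\bar{S}_H}(w)|\big)-2\ge k$; no vertex of $V_G$ lies in $S$, so Condition~2 holds. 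Thus $\gamma^d_k(G\diamondsuit H)\le m(G)\gamma^d_{k+2}(H)$.

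For the second bound, let $S_G$ be a GD$(k+n(H)\Delta_G)$-A of $G$, let $S_H$ be a smallest GD$(k+2)$-A of $H$, and put $S=S_G\cup\bigcup_{e\in E_{G\langle\bar{S}_G\rangle}}S_{H_e}$, adding a copy of $S_H$ only on those edges with both endpoints outside $S_G$. Copies on edges meeting $S_G$ are dominated by the endpoint lying in $S_G$, the remaining copies by their $S_{H_e}$, and $V_G$ by $S_G$. For $v\in S_G$ no copy at $v$ meets $S$, so $|N_S(v)|-|N_{\bar{S}}(v)|=\big(|N_{S_G}(v)|-|N_{\bar{S}_G}(v)|\big)-\deg_G(v)n(H)\ge\big(k+n(H)\Delta_G\big)-\Delta_G n(H)=k$; for $w\in S_{H_e}$ with $e=uv$ and $u,v\notin S_G$ the computation of the first bound again gives $\ge k$. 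Counting vertices yields $\gamma^d_k(G\diamondsuit H)\le\gamma^d_{k+n(H)\Delta_G}(G)+\gamma^d_{k+2}(H)\,|E_{G\langle\bar{S}_G\rangle}|$.

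For the third bound, assume $k>-(n(H)+1)$, which forces $\lceil k/(n(H)+1)\rceil\ge 0$, so a GD$\lceil k/(n(H)+1)\rceil$-A $S'_G$ of $G$ exists; let $S_H$ be a smallest GD$(k+2)$-A of $H$. Put $S=S'_G\cup\bigcup_{e\in E_{G\langle S'_G\rangle}}V_{H_e}\cup\bigcup_{e\in E_{G\langle\bar{S'}_G\rangle}}S_{H_e}$: copies on edges inside $S'_G$ are taken in full, copies on edges outside $S'_G$ get a copy of $S_H$, and copies on mixed edges contribute nothing (being dominated by the endpoint in $S'_G$). Domination is clear. For $v\in S'_G$, writing $a=|N_{S'_G}(v)|$ and $b=|N_{\bar{S'}_G}(v)|$, the $a$ full copies at $v$ give $an(H)$ neighbours in $S$ and the $b$ empty copies give $bn(H)$ neighbours in $\bar{S}$, so $|N_S(v)|-|N_{\bar{S}}(v)|=(a-b)(n(H)+1)\ge\lceil k/(n(H)+1)\rceil(n(H)+1)\ge k$. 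For $w\in V_{H_e}$ with $e\in E_{G\langle S'_G\rangle}$ all neighbours of $w$ — its $H$-neighbours together with the two endpoints of $e$ — lie in $S$, so $|N_S(w)|-|N_{\bar{S}}(w)|=\deg_H(w)+2\ge\delta_H+2\ge k$. For $w\in S_{H_e}$ with $e\in E_{G\langle\bar{S'}_G\rangle}$ the first-bound computation gives $\ge k$. This produces the third term, and taking the minimum over the three constructions proves the theorem.

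The main obstacle is bookkeeping: lining up the parameter shifts in each factor — the $+2$ reflects the two $G$-endpoints seen by a vertex of an $H$-copy, while $+n(H)\Delta_G$ and the division by $n(H)+1$ reflect the $n(H)$ vertices of each incident $H$-copy seen by a $G$-vertex — together with handling the degenerate cases (isolated vertices of $G$, $H$ with isolated vertices, and values of $k$ for which some invariant is infinite, where the bound is vacuously true). The step most likely to require care is the third construction, since a vertex of a fully chosen $H$-copy has no neighbour outside $S$, and one must invoke the admissibility bound $k\le\delta_H+2$ to keep its defensive condition from failing when $k$ is large.
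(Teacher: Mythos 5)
Your proposal is correct and follows essentially the same route as the paper: the same three constructions (copies of a GD$(k+2)$-alliance over every edge; a GD$(k+n(H)\Delta_G)$-alliance in $G$ supplemented by copies of a GD$(k+2)$-alliance over the edges of $G\langle \bar{S}_G\rangle$; and the mixed construction with full $H$-copies over edges inside $S'_G$), with the same domination and defence computations. You in fact supply details the paper leaves implicit, notably the check $\deg_H(w)+2\ge k$ for vertices of full copies via the admissibility bound $k\le \delta_H+2$, where the paper only says the conclusion is ``clear.''
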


\begin{proof}
Let $G'$ denote the copy of $G$ in $G\diamondsuit H$, and let $H_i$ be the copy of $H$ corresponding to an edge $e_i\in E_{G}$. 

If $S_H$ is a global defensive $(k+2)$-alliance in $H$, then since each vertex of $H_i$ has exactly two neighbors outside $H_i$, the set  $\cup_{i=1}^{m(G)}S_{H_i}$, is a GD$k$-A in $G\diamondsuit H$ (again, $S_{H_i}$ is the copy of $S_H$ in $H_i$).
Thus $\gamma^d_k(G\diamondsuit H)\leq n(G)\gamma^d_{k+2}(H)$.

Let next $S_G$ be a global defensive $(k+n(H)\Delta_{G})$-alliance in $G$, and $S_H$ is a global defensive $(k+2)$-alliance in $H$. Since each vertex of $H_i$ has exactly two neighbors outside $H_i$ and as each vertex of $G'$ has at most $\Delta_{G} n(H)$ neighbors outside $G'$, 
the copy of $S_G$ in $G'$ together with the copies of $S_{H}$ is each of the copies of $H$ corresponding to the edges from $G\langle \bar{S}_G\rangle$ form a global defensive 
$k$-alliance in $G\diamondsuit H$. So, $\gamma^d_k(G\diamondsuit H)\leq \gamma^d_{k+n(H)\Delta_{G}}(G)+\gamma^d_{k+2}(H)|E_{G'\langle \bar{S}_G\rangle}|$.

Suppose now that $S'_G$ is a global defensive $\lceil\frac{k}{n(H)+1}\rceil$-alliance in $G$. Let $S_1$ be the set of vertices of $G\diamondsuit H$ that lie in the copies of $H$ corresponding to the edges of $G'\langle S'_{G'}\rangle$. In addition, in every copy of $H$ corresponding to the edges from $\langle\bar{S'}_{G'}\rangle$ select a global defensive $(k+2)$-alliance in $H$. Then set $S = S'_{G'}\cup S_1 \cup S_2$. We claim that $S$ is a GD$k$-A in $G\diamondsuit H$. 

$S$ is a dominating set in $G\diamondsuit H$, because $S'_{G'}$ dominates $G'$ and all the copies of $H$ above its edges as well as above copies of $H$ above edges with one endpoint in 
$S'_{G'}$, while the other copies of $H$ are dominated by $S_2$. To show that $S$ is a D$k$-A in $G\diamondsuit H$ consider $u\in S'_{G'}$. Then 
\begin{eqnarray*}
|N_{S}(u)|-|N_{\bar{S}}(u)| & = & (|N_{S'_{G}}(u)|-|N_{\bar{S}'_{G}}(u))|\cdot (n(H)+1)\\
& \geq & \left\lceil\frac{k}{n(H)+1}\right\rceil (n(H)+1)\geq k\,.
\end{eqnarray*}
The same conclusion is clear when  $u\in S\setminus S'_{G'}$. Therefore, $S$ is a $k$-alliance in $G\diamondsuit H$ and
so $\gamma^d_k(G\diamondsuit H)\leq \gamma^d_{\lceil\frac{k}{n(H)+1}\rceil}(G)+n(H)|E_{G\langle S'_G\rangle}|+\gamma^d_{k+2}(H)|E_{G\langle \bar{S'}_G\rangle}|$. 
\end{proof}

The {\em sun graph} $S_n$ is obtained by replacing every edge of a cycle $C_n$ by a triangle $C_3$, cf.~\cite{bra}. See Fig.~\ref{fig5} for $S_3$. 

\begin{figure}[ht!] 
 \centerline{\includegraphics[scale=.5]{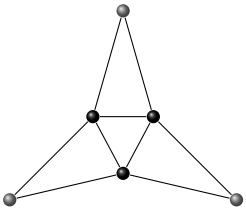}}
\caption{\label{fig5} The sun graph $S_3$}
\end{figure}

From our point of view note that $S_n = C_n\diamondsuit K_1$. Since $\gamma^d_2(K_1)=\infty$, $\gamma^d_2(C_3)=3$, $\gamma^d_0(C_3=2$, $|E_{C_3\langle S_G\rangle}|=1$, $|E_{C_3\langle \bar{S}_G\rangle}|=0$, and $I(0)=1$, Theorem~\ref{th6} implies that $\gamma^d_0(S_3) = \gamma^d_0(C_3\diamondsuit P_1) \leq 3$. Actually, $\gamma^d_0(S_3) = 3$ (in Fig.~\ref{fig5} elements of a global defensive alliance in $C_3\diamondsuit P_1$ are colored black), we have the inequality in Theorem~\ref{th6}.

\medskip

\noindent {\bf Acknowledgements.}
S.K.\ acknowledges the financial support from the Slovenian Research Agency (research core funding No.\ P1-0297 and projects J1-7110, J1-9109, N1-0043). 
The authors are indebted to the referees for helpful remarks which leaded us to correct and improve the paper.

\end{document}